\documentclass[12pt]{amsart}
\usepackage[utf8]{inputenc}
\usepackage{amsmath}
\usepackage{amssymb}
\usepackage{amsfonts}
\usepackage{amsthm}
\usepackage{mathrsfs} 
\usepackage{bm}
\usepackage{bbm}
\usepackage{tikz}
\usepackage{tikz-cd}
\usepackage{centernot}
\usepackage{hyperref}
\usepackage{thmtools, thm-restate}
\usepackage[margin=1.0in]{geometry}
\setlength{\parskip}{\baselineskip}

\DeclareMathOperator{\Id}{Id}

\newcommand{\R}{\Bbb{R}}

\newcommand{\GG}{\mathcal{G}}

\newcommand{\PP}{\mathcal{P}}

\renewcommand{\SS}{\mathcal{S}}

\newcommand{\ep}{\epsilon}

\hypersetup{
    colorlinks=true,
    linkcolor=blue,
    filecolor=magenta,
    urlcolor=blue,
    citecolor=blue
}


\newtheorem{thm}{Theorem}

\newtheorem{result}{Result}[section]
\newtheorem{lem}[result]{Lemma}

\newtheorem{prp}[result]{Proposition}

\theoremstyle{definition}

\newtheorem{rmk}[result]{Remark}

\newtheorem*{ack}{Acknowledgements}

\usepackage{caption}
\usepackage{booktabs}
\usepackage{makecell}
\usepackage{siunitx}
\usepackage{tcolorbox}

\theoremstyle{remark}



\newcommand{\hide}[1]{}

\newcommand{\rough}[1]{}
\definecolor{darkgreen}{RGB}{75,150,75}
\newcommand{\review}[1]{}

\newcommand{\hides}[1]{}
\newcommand{\pub}[1]{}


\usepackage{float}
\usepackage{subfig}

\title{A new upper bound to (a variant of) the pancake problem}
\author{Zach Hunter}
\address{Mathematical Institute, Oxford University}
\email{zachary.hunter@exeter.ox.ac.uk}
\date{\today}

\begin{document}

\maketitle

\begin{abstract}
    The ``pancake problem'' asks how many prefix reversals are sufficient to sort any permutation $\pi \in \mathcal{S}_k$ to the identity. We write $f(k)$ to denote this quantity.
    
    The best known bounds are that $\frac{15}{14}k -O(1) \le f(k)\le \frac{18}{11}k+O(1)$. The proof of the upper bound is computer-assisted, and considers thousands of cases.
    
    We consider $h(k)$, how many prefix and suffix reversals are sufficient to sort any $\pi \in \mathcal{S}_k$. We observe that $\frac{15}{14}k -O(1)\le h(k)$ still holds, and give a human proof that $h(k) \le \frac{3}{2}k +O(1)$. The constant ``$\frac{3}{2}$'' is a natural barrier for the pancake problem and this variant, hence new techniques will be required to do better.
\end{abstract}

\section{Introduction}

Given a positive integer $k$, we define the pancake graph $\PP_k$ to have vertex set $\mathcal{S}_k$ (the set of permutations of length $k$), with $\pi,\tau \in \mathcal{S}_k$ being adjacent if there exists $t$ so that $\pi(i) = \tau (i)$ for all $i\ge t$ and $\pi(i) = \tau(t-i)$ for all $i<t$ (we say such $\pi,\tau$ are \textit{related by a prefix reversal}).

An old problem (known as the \textit{pancake problem}) is to determine the growth of $f(k)$, the diameter of $\PP_k$ (which equals $\max_{\pi\in \mathcal{S}_k}\{d_{\PP_k}(\pi,\Id_k)\}$ as $\PP_k$ is vertex-transitive). It was shown by Gates and Papadimitrou that $17\lfloor k/16\rfloor  \le f(k) \le (5k+5)/3$ \cite{gates}. Later work \cite{upper,lower} has gone on to prove that
\begin{equation}
    15k/14 -O(1)\le f(k) \le 18/11 k+O(1).\label{pancakebounds}
\end{equation}
\noindent We note that the upper bound came from an intense computer-assisted proof, which involved 2220 cases.

In this paper, we study a related problem. Given $\pi,\tau\in \SS_k$, we say they are \textit{related by a suffix reversal} if there exists $t$ such that $\pi(i) = \tau(i)$ for all $i <t$ and $\pi(i) = \tau(n-i+t)$ for all $i\ge t$. We define the $\mathcal{G}_k$ to be the graph with vertex set $\SS_k$ with $\pi,\tau$ being adjacent if they are related by either a prefix reversal or suffix reversal. We shall consider $h(k)$, the diameter of $\mathcal{G}_k$ (which equals $\max_{\pi \in \SS_k}\{d_{\mathcal{G}_k}(\pi,\Id_k)\}$ due to vertex-transitivity).

Clearly, $h(k)\le f(k)$ and so by Eq.~\ref{pancakebounds} we have $h(k)\le 18k/11+O(1)$. Interestingly, the argument in \cite{lower} proving the lower bound in Eq.~\ref{pancakebounds} continues to work in this new setting, allowing us to conclude $h(k)\ge 15k/14-O(1)$ also holds. Thus, studying the growth of $h$ seems like a natural option to better understand the pancake problem. 

We manage to prove a new upper bound for $h(k)$ which improves upon the observation above that $h(k) \le 18k/11 +O(1)$.
\begin{thm}\label{main}For all $k\ge 1$, we have
\[h(k)\le 3k/2+4.\]
\end{thm}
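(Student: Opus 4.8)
The plan is to run a breakpoint / potential-function argument in the spirit of the classical $\tfrac53 k$ bound, but exploiting the fact that here we may attack from either end. For $\pi\in\SS_k$ set $\widehat\pi=(0,\pi(1),\dots,\pi(k),k+1)$ and let $b(\pi)$ be the number of indices $i\in\{0,\dots,k\}$ with $|\widehat\pi(i+1)-\widehat\pi(i)|\neq1$; call these the \emph{breakpoints} of $\pi$. One checks easily that $b(\pi)=0\iff\pi=\Id_k$, that $b$ never takes the value $1$, and that $b(\pi)\le k+1$. The structural point that makes the method work here is that a single move alters at most two adjacencies of $\widehat\pi$: a prefix reversal can only affect the adjacency at index $0$ and one interior adjacency, while a suffix reversal can only affect the adjacency at index $k$ and one interior adjacency. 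In particular each move changes $b$ by at most $2$.

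The core of the argument is a local lemma: \emph{there is an absolute constant $C$ so that whenever $b(\pi)\ge C$, some sequence of at most three moves decreases $b$ by at least two.} Granting this, one sorts $\pi$ by iterating the lemma until $b$ falls below $C$ --- at an amortised cost of $\tfrac32$ moves per breakpoint eliminated --- and then finishes the remaining position by a finite check (positions with $b<C$ can be sorted within a bounded number of moves, since such a $\pi$ has boundedly many maximal strips and these may be rearranged and reoriented in boundedly many moves). Since $b(\pi)\le k+1$ always, this yields $h(k)\le\tfrac32 k+O(1)$; tightening the accounting of the rounding and of the bounded endgame produces the stated constant~$4$.

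To prove the local lemma one may begin with the ``generic'' moves. Let $p$ be the position of the value $1$. Reversing the prefix of length $p$ brings $1$ next to the sentinel $0$; a short computation (only two adjacencies move) shows this single move decreases $b$, \emph{unless} the value $2$ lies immediately to the right of $1$ and $\pi(1)\neq3$ --- the sole obstruction. Symmetrically, a suitable suffix reversal brings $k$ next to the sentinel $k+1$ and decreases $b$ unless $k-1$ lies immediately to the left of $k$ and $\pi(k)\neq k-2$. When only one of these obstructions is present one uses the other move; the genuine work is the ``doubly obstructed'' case, where $1$ heads an increasing strip $1,2,\dots,r$ and $k$ ends an increasing strip $k',\dots,k$. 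There one proceeds directly: transporting the strip $1,\dots,r$ to the front as a block (one prefix reversal through its right endpoint) and then reorienting it (a second prefix reversal) already achieves a net decrease of two in many sub-cases, and in the remainder a third move --- typically a suffix reversal pivoting on the strip at the $k$-end --- recovers the deficit. It is precisely the availability of suffix reversals that keeps this case split finite and human-checkable, in contrast with the prefix-only problem, where the analogous analysis explodes into thousands of cases.

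The main obstacle is exactly this last case analysis: one must exhibit, for every doubly obstructed local pattern, an explicit sequence of at most three moves with net effect $-2$ on $b$, and verify both that a move repairing breakpoints near $1$ or near $k$ does not inadvertently create a breakpoint elsewhere (which is controlled, since a move disturbs only two adjacencies) and that the various exceptional sub-patterns do not cascade into new obstructions. A secondary, bookkeeping-level issue is the endgame: bounding the number of moves needed when $b(\pi)<C$ and then tuning the constants so that the final bound reads exactly $\tfrac32 k+4$ rather than $\tfrac32 k+O(1)$.
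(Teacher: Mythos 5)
Your proposal takes a genuinely different route from the paper. You run a classical breakpoint count with sentinels $0$ and $k+1$ (Gates--Papadimitriou style), whereas the paper's central device is the \emph{pairing} of the alphabet into $\{1,2\},\{3,4\},\dots,\{k-1,k\}$ and the finer equivalence relation $\approx_\pi$ that refuses to count a value as ``settled'' unless it sits with its partner. That pairing is what the paper explicitly calls its key insight, and it is the mechanism by which the worst merging step achieves exactly the $3{:}2$ ratio: in the paper's Case~2, if the value adjacent to the front block is free, its \emph{partner} is automatically free as well, so three prefix reversals absorb two singletons at once. Your framework has no analogue of this forced doubling.

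The genuine gap is your local lemma --- that whenever $b(\pi)\ge C$, some sequence of at most three prefix/suffix reversals decreases $b$ by at least two. This statement carries the entire weight of the argument, and it is asserted, not proved. You concede this yourself: you call the doubly obstructed case ``the genuine work'' and offer only a qualitative sketch (``already achieves a net decrease of two in many sub-cases, and in the remainder a third move \dots recovers the deficit''). Nothing in the sketch controls how large that residual case split is, nor why adding suffix reversals tames it rather than merely relocating the obstruction to the other end of the word (after your two prefix moves rehousing the strip $1,\dots,r$, the obstruction at the $k$-end is typically still in place, and a single suffix move there faces exactly the same blockage). In short: the bookkeeping framework (amortised $3/2$ per breakpoint, $b\le k+1$, endgame constant) is sound, but the lemma it rests on is unestablished, and the one structural idea the paper uses to make that lemma provable in a handful of cases --- never splitting a pair --- does not appear in your argument.
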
\hide{This arises from a modification of the algorithm of \cite{gates}, which worked by growing blocks of consecutive letters.}Like all past work establishing upper bounds for the pancake problem (see \cite{upper,gates}), we obtain Theorem~\ref{main} by growing blocks of consecutive letters in accordance with some potential function. Our improvement comes from initially partitioning our alphabet into pairs, and when growing our blocks we do not allow ourselves to ``split'' any pairs. This self-imposed constraint causes some of the worst case scenarios to turn out better. \hide{We note that, in proving Theoerm~\ref{main}, our usage of suffix reversals is very minimal (see Remark~\ref{suffixusage} for more details).}

For reasons discussed in Section~\ref{conc}, the coefficient $\frac{3}{2}$ is a very natural barrier. In short, showing $h(k)<(3/2-\ep)k+O(1)$ for some $\ep >0$ would require either: an innovative strategy which overcomes ``greedy local approaches'' (which currently are the only approaches used in literature), or an improvement to a variant of the so-called ``burnt pancake problem'' (defined later).

\begin{ack}We thank Bhadrachalam Chitturi and Laurent Bulteau for informing us this problem was of interest. We also thank Matt Kwan for useful suggestions about improving the presentation. Lastly, we thank Alex Bryan for help creating Figure 1.

Parts of this note were prepared while the author was at IST Austria, we thank them for their hospitality.

\end{ack}

\section{Preliminaries}\label{prelim}

For positive integer $n$, we write $[n]: =\{1,\dots,n\}$.

\hide{Next, for a finite set $\Sigma$, we write $\Sigma^*:= \bigcup_{\ell=0}^\infty \Sigma^\ell$ to denote the set of all ``finite words'' on the alphabet $\Sigma$ (with each $w\in \Sigma^\ell$ being viewed as a function from $[\ell]$ to $\Sigma$). Given $x,y\in \Sigma^*$, we write $xy$ to denote their concatenation.}

We will consider permutations $\pi \in \SS_k$ as being functions from $[k]$ to $[k]$ which are bijective.

We shall now introduce some notation and ideas, which are ported from the work of \cite{upper,gates}.

First, we note the following fact. We defer its proof to Appendix~\ref{standard}, since it is standard and well-known.
\begin{restatable}{lem}{mono}\label{mono}
For any integer $k\ge 1$, we have
\[h(k)\le h(k+1).\]
\end{restatable}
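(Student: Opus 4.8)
The goal is to show that any permutation $\pi \in \mathcal{S}_k$ can be sorted using at most as many prefix/suffix reversals as the worst case in $\mathcal{S}_{k+1}$. The natural approach is to embed $\mathcal{S}_k$ into $\mathcal{S}_{k+1}$ by appending the letter $k+1$ at the end: given $\pi \in \mathcal{S}_k$, define $\pi^+ \in \mathcal{S}_{k+1}$ by $\pi^+(i) = \pi(i)$ for $i \le k$ and $\pi^+(k+1) = k+1$. Since $\pi^+$ can be sorted to $\Id_{k+1}$ in at most $h(k+1)$ steps, it suffices to argue that this sorting sequence can be \emph{simulated} on $\pi$ within the same number of moves, producing a sorting of $\pi$ to $\Id_k$.

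The key observation is that the letter $k+1$ is the largest, so in $\Id_{k+1}$ it sits in the last position, and one can track where it lies throughout any sequence of moves. First I would take a shortest sorting sequence $\pi^+ = \sigma_0, \sigma_1, \dots, \sigma_m = \Id_{k+1}$ with $m \le h(k+1)$. At each step $\sigma_{j}\to\sigma_{j+1}$ we apply either a prefix reversal at some cut point $t$ or a suffix reversal at some cut point $t$. I will maintain the invariant that each $\sigma_j$, after deleting the single entry equal to $k+1$ and relabelling the remaining positions in order, is a permutation $\tau_j \in \mathcal{S}_k$ reachable from $\pi$ using at most $j$ prefix/suffix reversals. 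To see the inductive step: a prefix reversal of $\sigma_j$ at cut point $t$ either has $k+1$ sitting at a position $\ge t$ (so it is untouched, and the move restricts to a prefix reversal of $\tau_j$ at the corresponding cut point — or a trivial move if the reversed block contains no ``real'' letters other than possibly being empty after deletion), or $k+1$ sits at a position $< t$, in which case the reversed block of $\sigma_j$, once $k+1$ is deleted, is exactly a contiguous block that is either a prefix or a suffix of $\tau_j$ depending on whether $k+1$ was at the very start; crucially, after deletion this is still realized by a single prefix or suffix reversal (possibly trivial) on $\tau_j$. Symmetrically for suffix reversals. Hence $\tau_{j+1}$ is obtained from $\tau_j$ in at most one move, and $\tau_m = \Id_k$ since $\sigma_m = \Id_{k+1}$. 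Therefore $\pi$ is sorted in at most $m \le h(k+1)$ moves, giving $h(k) \le h(k+1)$.

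The step I expect to require the most care is verifying that ``deleting $k+1$'' always turns a single prefix/suffix reversal on $\sigma_j$ into at most a single prefix/suffix reversal on $\tau_j$ — the subtle case being when $k+1$ lies strictly inside the reversed block but not at either end, where the reversal would seem to shuffle the real letters in a way not obviously realizable by one move. The resolution is that reversing a block and then deleting one interior element is the same as deleting that element first and then reversing the (now shorter) contiguous block, which is precisely one prefix or suffix reversal on $\tau_j$ at the appropriately shifted cut point; one just has to check the cut-point bookkeeping and that the reversal is still ``prefix'' or ``suffix'' type (it is, because $k+1$ being interior means the block still touches the same end of the word). No move ever needs to be ``split'' into two, so the move count is preserved exactly. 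This is routine once set up, but it is where a careless argument could go wrong.
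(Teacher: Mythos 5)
Your proof is correct and takes essentially the same approach as the paper: both rely on the projection that deletes the letter $k+1$ (the paper's map $\varphi:\GG_{k+1}\to\GG_k$) and the key observation that each prefix/suffix reversal in $\GG_{k+1}$ projects to at most one such reversal in $\GG_k$, so the projection is length-non-increasing on paths. You instantiate the argument by lifting $\pi$ to the specific $\pi^+$ with $k+1$ appended at the end and simulating a shortest sorting sequence of $\pi^+$, while the paper works abstractly with any path and then invokes surjectivity of $\varphi$, but the substance is identical.
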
\noindent This will be convenient for exposition, as our construction is best described when $k$ is divisible by $2$.

Next, given $\pi \in \SS_k$, we shall define an equivalence relation $\sim_\pi$ over $[k]$ as follows. For $i\in [k-1]$, we say that $(i,i+1)$ is a $\pi$-adjacency if $|\pi(i)-\pi(i+1)| =1$ or $\{\pi(i),\pi(i+1)\} = \{1,k\}$ (in other words, when the values differ by $\pm 1$ modulo $k$). For $j, j'\in [k]$ with $\pi^{-1}(j)\le \pi^{-1}(j')$, we say $j\sim_\pi j'$ if for all $i \in [\pi^{-1}(j),\pi^{-1}(j'))$ we have that $(i,i+1)$ is a $\pi$-adjacency.

\hide{For $i\in [k-1]$, we say that $(i,i+1)$ is a $\pi$-adjacency if $|\pi(i)-\pi(i+1)| =1$ or $\{\pi(i),\pi(i+1)\} = \{1,k\}$. We say that $j\sim_\pi j'$ if $\pi^{-1}(j),\pi^{-1}(j')$ belong to the same connected component }

The following fact was proven in \cite{gates}.
\begin{restatable}{lem}{lastblock}\label{lastblock}Let $\sigma\in \SS_k$. If $\sim_\sigma$ has only one equivalence class, then \[d_{\GG_k}(\sigma,\Id_k) \le d_{\PP_k}(\sigma,\Id_k) \le 4.\]
\end{restatable}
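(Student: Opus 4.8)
The plan is to separate the two inequalities. The first, $d_{\GG_k}(\sigma,\Id_k)\le d_{\PP_k}(\sigma,\Id_k)$, is immediate: $\PP_k$ and $\GG_k$ share the vertex set $\SS_k$, and every prefix-reversal pair is by definition also an edge of $\GG_k$, so $\PP_k$ is a spanning subgraph of $\GG_k$ and distances in $\GG_k$ are at most those in $\PP_k$. Everything else is about establishing the bound $d_{\PP_k}(\sigma,\Id_k)\le 4$, i.e.\ sorting $\sigma$ to $\Id_k$ with at most four prefix reversals.

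The crucial point is that the hypothesis rigidly constrains $\sigma$. That $\sim_\sigma$ has a single class says precisely that every consecutive pair $(\sigma(i),\sigma(i+1))$ is a $\sigma$-adjacency, i.e.\ $\sigma(i)$ and $\sigma(i+1)$ differ by $\pm 1$ modulo $k$. Reading $(\sigma(1),\dots,\sigma(k))$ as a walk on the cycle $\Z/k\Z$ that takes a unit step at each stage and visits every residue exactly once, one sees (for $k\ge 3$; the cases $k\le 2$ are trivial) that such a walk can never reverse direction, so $\sigma$ must be one of the two ``cyclic interval'' permutations
\[ (a,a+1,\dots,k,1,2,\dots,a-1) \qquad\text{or}\qquad (a,a-1,\dots,1,k,k-1,\dots,a+1) \]
for some $a\in[k]$; call these cyclic ascending and cyclic descending respectively.

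With this in hand, I would first observe that the full prefix reversal (of length $k$) sends a cyclic descending permutation to a cyclic ascending one, so at the cost of one reversal we may assume $\sigma=(a,a+1,\dots,k,1,\dots,a-1)$. If $a=1$ then $\sigma=\Id_k$ and we are done; otherwise apply three prefix reversals in turn: reverse the prefix of length $k-a+1$ to reach $(k,k-1,\dots,a,1,2,\dots,a-1)$, then reverse the entire word to reach $(a-1,a-2,\dots,1,a,a+1,\dots,k)$, then reverse the prefix of length $a-1$ to reach $\Id_k$. Altogether a single-class $\sigma$ is sorted using at most $1+3=4$ prefix reversals, which is the claim.

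I do not anticipate a genuine obstacle here: the argument is short once the rigidity observation is made. The only care needed is in the characterization step (ruling out direction changes in the Hamiltonian walk on $\Z/k\Z$) and in checking the degenerate parameter values $a=1$ and $a=k$, where some of the listed reversals become trivial but the total count is unaffected.
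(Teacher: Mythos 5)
Your proof is correct and follows essentially the same route as the paper's: both split off the trivial subgraph inequality, characterize the single-class permutations as the two families of ``cyclic interval'' permutations, use one full reversal to pass from the descending family to the ascending one, and then sort the ascending form with the same three-flip sequence (reverse the prefix up to $k$, reverse everything, reverse the short prefix). The only cosmetic difference is that you spell out the walk-on-$\Z/k\Z$ justification for the characterization, which the paper takes as read.
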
\noindent For the convenience of the reader, a proof of this may be found in Appendix~\ref{standard}.

In Section~\ref{mainproof}, we will prove the following.
\begin{prp}\label{mainlem}Let $k =2d$. Then for every $\pi \in \SS_k$, there exists $\tau \in \SS_k$ with $\sim_\tau$ having only one equivalence class, such that
\[d_{\GG_k}(\pi,\tau) \le \frac{3}{2}k-2.\]
\end{prp}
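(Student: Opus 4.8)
The plan is to adapt the block-growing strategy of \cite{gates} to the pair-constrained setting. Fix the pairing $P_i := \{2i-1,2i\}$ of $[k]$ into $d$ pairs. Throughout the process I maintain a \emph{block} $B$: a set of consecutive integers of the form $\{1,2,\dots,2j\}$ that occurs contiguously (in one of its two orientations) in the current permutation and is anchored at one end of the string, so that its ``growing end'' is freely accessible; crucially, $B$ is always a union of complete pairs. I then show how to enlarge $B$ from $\{1,\dots,2j\}$ to $\{1,\dots,2j+2\}$ — that is, how to absorb the next pair $P_{j+1}$ — at an amortized cost of at most $3$ prefix/suffix reversals, with the first and last pairs cheaper. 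Once $B=[k]$ the resulting permutation $\tau$ has $\sim_\tau$ with a single equivalence class, and the reversals spent total at most $3d-2=\tfrac32 k-2$.

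The heart of the argument is a case analysis on the \emph{local configuration} of $P_{j+1}$: whether $2j+1$ and $2j+2$ already sit adjacently (and in which relative orientation), which of them lies nearer the growing end of $B$, whether either occupies an extreme position of the string, and what letter currently abuts $B$'s growing end. In the favourable cases ($P_{j+1}$ already contiguous, or one of its letters already abutting $B$) two reversals suffice by the usual bring-to-an-end-then-fold-in maneuver. In the generic case one spends one reversal to bring $2j+1$ to an extreme position, a second to drag its partner $2j+2$ alongside it, and a third to attach the freshly formed $P_{j+1}$-block to $B$; the point is that because we never split the two letters of a pair, the configurations which in the unconstrained analysis of \cite{gates} force a genuinely wasted reversal — pushing the per-letter cost above $\tfrac32$ — do not arise here. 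I would package this as an amortized/potential-function bookkeeping: let $\Phi$ count (roughly) the number of pairs not yet in $B$, plus a small correction for the configuration at the growing end, show $\Phi$ decreases by the right amount at each reversal, and read off the $3d$ bound. The $-2$ is recovered at the ends: making $\{1,2\}$ into the initial block costs at most $2$ (at most $1$ if a letter of $P_1$ starts at an extreme position), while the final pair $P_d$ is essentially free, since when $B=\{1,\dots,2d-2\}$ is anchored at an end the two remaining letters already occupy the opposite two positions, so at most one reversal finishes. Suffix reversals are invoked only in a few of these boundary and bad configurations, consistent with their minimal use.

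The main obstacle is precisely this case analysis: enumerating the local configurations at the growing end of $B$, verifying that the pairing constraint eliminates every configuration that would otherwise cost four or more reversals, and doing the end-of-process bookkeeping carefully enough to land on $3d-2$ rather than $3d-1$ or $3d$. A secondary technical point is to track the two orientations in which $B$ (and each absorbed pair) can sit — the ``burnt-pancake'' flavour alluded to in the introduction — and to ensure that the orientation of $B$ after absorbing $P_{j+1}$ is again suitable for the next step; mismatched orientation is what occasionally forces the third (or a suffix) reversal, and sharpening that step is exactly what an improvement past the $\tfrac32$ barrier in Section~\ref{conc} would require.
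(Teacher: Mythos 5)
Your proposal maintains a single anchored block $B=\{1,\dots,2j\}$ at one end of the string and grows it one pair at a time, claiming an amortized cost of $3$ reversals per pair. This is structurally different from the paper's argument, which never privileges a particular growing block: the paper refines $\sim_\pi$ to a pair-respecting equivalence $\approx_\pi$, lets an \emph{arbitrary} collection of blocks and singletons coexist, and drives the potential $\nu(\pi)=\tfrac32 S(\pi)+2B(\pi)$ from at most $\tfrac32 k$ down to $2$ via a four-case analysis on $\pi(1)$. Crucially, in that analysis the expensive step (absorbing two fresh singletons, $3$ flips) is subsidized by the cheap steps that simply \emph{merge two preformed blocks} for $1$ or $2$ flips while gaining $2$ in potential. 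Your single-block invariant forbids exactly those cheap merge moves, and that is where the plan breaks.

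Concretely, the ``generic case'' of your sketch does not cost $3$ reversals but $4$. Say $B$ sits as the suffix at positions $k-2j+1,\dots,k$ with value $2j$ at position $k-2j+1$. To preserve $B$ anchored, your usable reversals are prefix reversals of length at most $k-2j$ (plus the full reversal, or the one suffix reversal that merely flips $B$). If $2j+1$ is at position $a$ and $2j+2$ at position $b$ with $a<b\le k-2j$, neither at position $1$, and $b-a>1$, then after one flip you may put $2j+1$ at position $1$, and after a second you may make the pair adjacent at positions $b-1,b$; but any single further prefix reversal of length $m$ sends $2j+1$ from position $b-1$ to position $m-b+2$, and $m-b+2=k-2j$ forces $m=k-2j+b-2>k-2j$, which would shatter $B$. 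You must therefore spend a third flip to bring the pair to positions $1,2$ and a fourth of length $k-2j$ to attach it, i.e.\ $4$ flips per pair, or $2$ per letter---only the trivial bound. The pairing constraint does not eliminate this configuration, and there is no amortization in sight: after the $4$-flip step the next pair $\{2j+3,2j+4\}$ is again generic. This is precisely why the paper does \emph{not} insist on a sorted anchored block: by allowing $\{2j+1,2j+2\}$ (or any pair) to form a block wherever it happens to sit, and later absorbing that whole block in a single cheap reversal (Cases~3 and~4 of Lemma~\ref{key}), it recoups the cost.
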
\noindent In \cite{upper,gates}, weaker forms of Proposition~\ref{mainlem} were proven when $\GG_k$ is replaced by $\PP_k$.

We now conclude by quickly deducing Theorem~\ref{main}.
\begin{proof}[Proof of Theorem~\ref{main} assuming Proposition~\ref{mainlem}]First, if $k = 2d$, then for all $\pi \in \SS_k$, we have that $d_{\GG_k}(\pi,\Id_k)\le (\frac{3}{2}k-2)+4$ by invoking Proposition~\ref{mainlem} followed by Lemma~\ref{lastblock}. So here, we have $h(k) \le \frac{3}{2}k+2$.

Otherwise, if $k = 2d-1$, then $h(k) \le h(k+1) = \frac{3}{2}(k+1)+2\le \frac{3}{2}k+4$ by Lemma~\ref{mono}.\end{proof}

\section{Proof of Proposition 2.3}\label{mainproof}

Throughout we shall assume that $k = 2d$ for some $d\ge 1$. For $j \in [k]$, we let $o_j  = (-1)^{j+1},\phi(j)=j+o_j$, so that \[\phi(1) = 2, \phi(2) = 1, \phi(3)=4,\phi(4) = 3,\dots, \phi(k-1)=k,\phi(k)=k-1.\]One should think of $j$ and $\phi(j)$ as being partners who are ``paired together''.

Now, given $\pi \in \SS_k$, we shall define a new equivalence relation $\approx_\pi$ on $[k]$, which is finer than $\sim_\pi$. Namely, given $j,j' \in [k]$, we say $j\approx_\pi j'$ if and only if either:
\begin{itemize}
    \item $j\sim_\pi j'$ but also $j\sim_\pi \phi(j)$ and $j' \sim_\pi \phi(j')$;
    \item or $j= j'$.
\end{itemize}\noindent For example, if $\pi = (2,3,4,5,1,8,6,7)$, then the set of equivalence classes for $\sim_\pi$ is \[\big\{\{2,3,4,5\},\{1,8\},\{6,7\}\big\},\] whilst the set of equivalence classes for $\approx_\pi$ is \[\big\{\{2\},\{3,4\},\{5\},\{1\},\{8\},\{6\},\{7\}\big\}.\]

The introduction of $\approx_\pi$ is our key insight. In the work of \cite{upper,gates}, when given $\pi \in \SS_k$ where $\sim_\pi$ had multiple equivalence classes, they'd give a sequence of reversals to reach some $\tau$ where $\sim_\tau$ was strictly coarser\footnote{Meaning $\sim_\tau$ has fewer equivalence classes than $\sim_\pi$, and that each equivalence class of $\sim_\pi$ is contained in some equivalence class of $\sim_\tau$.} than $\sim_\pi$. We shall do the exact same thing, but with $\approx_\pi$ in place of $\sim_\pi$.

Given permutation $\pi \in \SS_k$, we say an equivalence class $C\subset [k]$ of $\approx_\pi$ is a \textit{block} (of $\pi$) if $|C|>1$, and otherwise it is called a \textit{singleton} (of $\pi$). We say $j\in [k]$ is \textit{free} if it does not belong to a block of $\pi$ (alternatively, if $\{j\}$ is an equivalence class of $\approx_k$).

Given a permuation $\pi$, we write $S(\pi)$ to count the number singletons of $\pi$, and $B(\pi)$ to count the number of blocks of $\pi$. Furthermore, we define the potential function $\nu(\pi) = \frac{3}{2}S(\pi)+2B(\pi)$. 

We shall show the following. \begin{lem}\label{key}
For any $\pi \in \mathcal{S}_k$ with $\nu(\pi)>2$, there exists $\tau \in \mathcal{S}_k$ with \[0<  d_{\GG_k}(\pi,\tau) \le \nu(\pi)-\nu(\tau).\]\end{lem}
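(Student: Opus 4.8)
The plan is to do a case analysis on the local structure of $\pi$, always producing $\tau$ via one, two, three, or four prefix/suffix reversals, and checking in each case that the potential drop $\nu(\pi)-\nu(\tau)$ pays for the number of reversals used. The intuition: a single prefix reversal that merges two blocks into one saves $2$ in potential (two blocks become one block: $2B$ drops by $2$), which affords one reversal; merging a singleton into a block, or a free element into a block, changes $\tfrac{3}{2}S+2B$ by $-\tfrac{3}{2}-2+2 = -\tfrac{3}{2}$, affording one reversal with slack to spare; pairing up two free singletons $j,\phi(j)$ into a new block costs us $-3S$-wise but $+2B$-wise, net $-1$, which is not enough for one reversal — so such moves must always be combined with something else. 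The constraint "never split a pair" (i.e. work with $\approx_\pi$, which only creates a block once both partners are already $\sim$-adjacent to it) is exactly what makes the bookkeeping close.

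Concretely, I would set up the argument as follows. Since $\nu(\pi) > 2$, $\pi$ is not a single block, so there is more than one $\approx_\pi$-class; in particular the first position hosts either a free element or the start of a block. First I would handle the case where some block $C$ does not already sit at the front: bring an endpoint of $C$ to position $1$ by one reversal, then with a second reversal attach it to whatever consecutive value continues it — the standard Gates–Papadimitriou "grow the front block" move — arguing that in the worst case this absorbs a singleton or free element, or merges with another block, giving a drop of at least $2$ (often $3$ or $\tfrac72$) for at most $2$ reversals. The delicate part is when the only way to extend forces us to first pair up two free partners $j,\phi(j)$ (drop $1$, one reversal, not self-financing): here I would show that after pairing them the newly created block can immediately be grown further, or that the pairing move can be arranged to simultaneously merge with an existing block, so that two reversals achieve a drop of at least $2$. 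I expect this interaction — free elements that must be paired before they can be usefully merged — to be the main obstacle, and the reason the paper restricts to $k$ even and tracks $\approx_\pi$ rather than $\sim_\pi$: it guarantees that whenever a block is created the partner structure is already "paid for."

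Finally I would treat the boundary/degenerate cases: when $\pi$ consists entirely of free singletons (then $\nu(\pi) = \tfrac32 k$ and one must start by pairing partners and growing), when there is exactly one block and the rest free, and the parity/indexing issues at positions $1$ and $k$ (this is where the suffix reversals get used, to access the block structure from the right when the left end is unfavorable — presumably only a couple of times, matching Remark~2.4's comment that suffix usage is minimal). Throughout, the verification that $d_{\GG_k}(\pi,\tau)>0$ is automatic since each listed move changes $\pi$, and $d_{\GG_k}(\pi,\tau)\le(\text{number of reversals used})\le\nu(\pi)-\nu(\tau)$ by the triangle inequality. The proposition (Proposition~2.3) then follows by iterating Lemma~2.5 until $\nu$ drops to $2$, i.e. until $\sim_\tau$ (equivalently $\approx_\tau$) is a single class, and bounding the total distance by the telescoping sum $\nu(\pi)-2 \le \tfrac32 k - 2$.
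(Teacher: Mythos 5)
Your high-level plan --- case analysis on the local structure at the front of $\pi$, one small flip sequence per case, checked against the potential function $\nu = \tfrac32 S + 2B$ --- is the same route the paper takes, and the intuition you give for why $\approx_\pi$ rather than $\sim_\pi$ makes the bookkeeping close is accurate. But there is a concrete arithmetical slip that would derail the details. You write that pairing two free partners $j,\phi(j)$ into a block changes $\nu$ by $-1$ and that this ``is not enough for one reversal --- so such moves must always be combined with something else.'' It \emph{is} enough: the lemma requires $d_{\GG_k}(\pi,\tau)\le \nu(\pi)-\nu(\tau)$, and a single prefix reversal giving a drop of exactly $1$ satisfies $1\le 1$. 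Indeed this is precisely the paper's Case 1, done with one flip and zero slack. Believing the pairing move runs a deficit pushes you towards an amortized argument (``make up for it later''), which the statement of the lemma does not allow --- each step must individually satisfy $d\le\Delta\nu$, so a lossy first move would already falsify the claim. Once you notice the pairing move is self-financing, that whole complication evaporates.

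The rest of your sketch is somewhat vaguer than what the paper actually does. The paper always anchors the case analysis on $j=\pi(1)$: Case 1 is $j$ free (one flip), Case 2 is $j$ in a block but $j-o_j$ free (three flips to absorb the free pair $j-o_j,\phi(j-o_j)$), Case 3 is $j-o_j$ at the front of its block (one flip merges the two blocks), and Case 4 is $j-o_j$ at the back of its block (one suffix flip to re-orient it, then one prefix flip to merge). In particular, the suffix reversal is not a ``boundary/indexing issue at positions $1$ and $k$'' --- it is triggered by the orientation of the \emph{next} block relative to the front block, and it is the only place a suffix flip is used. Your proposed move of ``bring an endpoint of $C$ to position $1$ by one reversal, then attach with a second reversal'' also spends two flips where the paper's Case 3 spends one; that kind of extra spending is exactly what would break the accounting, given that the entire argument closes with zero slack in Cases 1, 2, and 4.
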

\hide{Now, before proving Lemma~\ref{key}, we shall quickly deduce Theorem~\ref{main}.
\begin{proof}[Proof of Theorem~\ref{main}, assuming Lemma~\ref{key}] First, consider $k=2d$
\end{proof}}
\noindent Noting that $S(\pi)+2B(\pi)\le k$ always holds (as $[k]$ is a disjoint union of $S(\pi)$ singletons and $B(\pi)$ sets of size at least two), we see that $\nu(\pi) \le 3k/2$ for all $\pi \in \mathcal{S}_k$. Meanwhile, it is clear that $\nu(\pi) = 2$ if and only if $\sim_\pi$ only has one equivalence class. Thus, iteratively applying Lemma~\ref{key} quickly gives Proposition~\ref{mainlem}.

\begin{proof}[Proof of Lemma~\ref{key}]Let $j=\pi(1)$ be the first letter of $\pi$. As $\nu(\pi)>3$, one of the four following cases will hold, and we may construct $\tau$ according to Fig~\ref{fig}.

\textbf{Case 1}: $j$ is free. In this case $\phi(j)$ will also be free. Here we use a prefix reversal to merge $j,\phi(j)$ into a block. The flipping is depicted in Fig 1a. 

\textbf{Case 2:} $j$ is not free, but $j-o_j$ (and $\phi(j-o_j)$) are free. Here we use three prefix reversals to add $j-o_j,\phi(j-o_j)$ to the block containing $j$. The sequences of flips is depicted in Fig 1bi and 1bii (depending on if $j-o_j$ is to the left of its partner $\phi(j-o_j)$).

\textbf{Case 3:} $j,j-o_j$ are not free, with $j-o_j$ being first in its block. Here we use a prefix reversal to merge the blocks containing $j$ and $j-o_j$. The flipping is depicted in Fig 1c.

\textbf{Case 4:} $j,j-o_j$ are not free, with $j-o_j$ being last in its block. Here we perform a suffix reversal followed by a prefix reversal to join the two blocks. The sequence of flips is depicted in Fig 1d.

\begin{figure}[H]
\subfloat[Case 1]{\includegraphics[width = 3in]{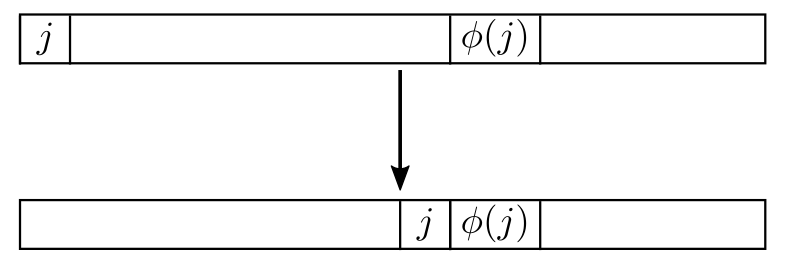}} 
\subfloat[Case 2a]{\includegraphics[width = 3in]{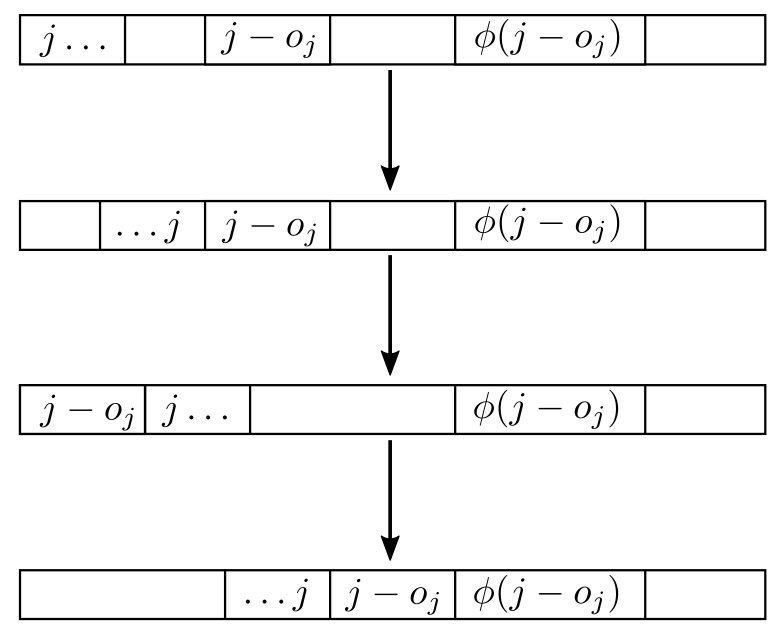}}\\
\subfloat[Case 2b]{\includegraphics[width = 3in]{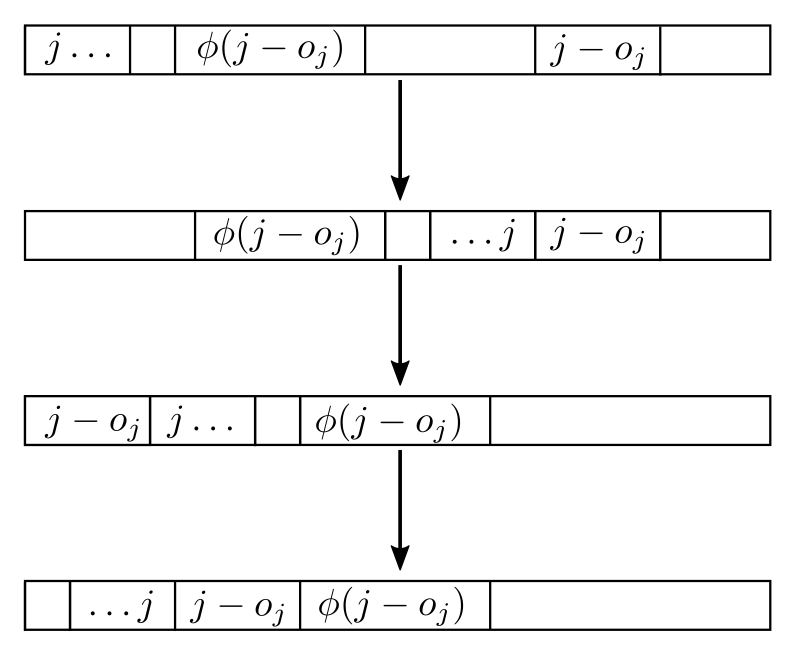}}
\subfloat[Case 3]{\includegraphics[width = 3in]{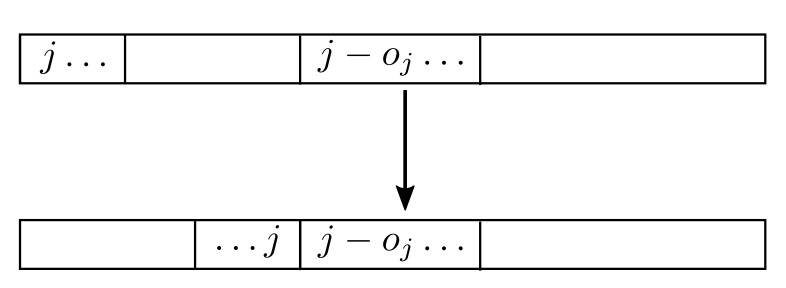}}\\
\subfloat[Case 4]{\includegraphics[width = 3in]{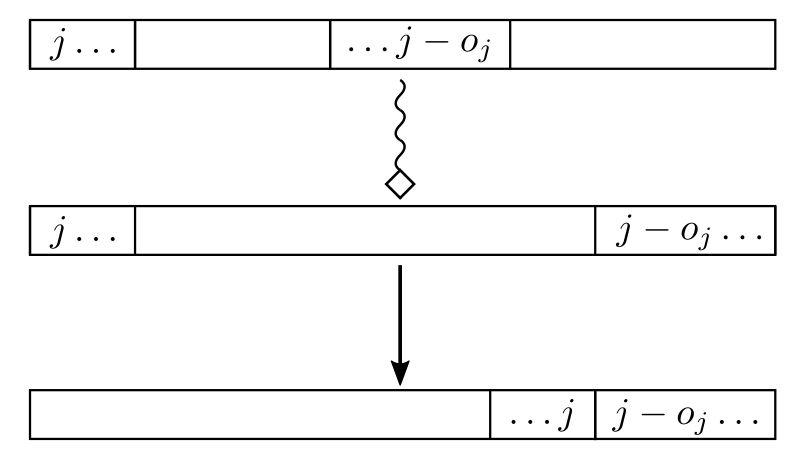}}
\caption{Our flip sequences. Here the ``wiggly arrow'' in Case 4 represents our only suffix reversal.}
\label{fig}
\end{figure}

Looking at Table~\ref{tab:caseanalysis}, we see $\tau$ behaves as desired (since the potential gain is always positive).

\begin{table}[H]
\begin{tcolorbox}[arc=0pt,boxrule=0pt]
  \centering
  \caption{Analysis of cases}
  \label{tab:caseanalysis}
  \begin{tabular}{llllll} 
    \toprule \thead{Case} &
    \thead{$\Delta(S)$\\($S(\pi)-S(\tau)$)} & \thead{$\Delta(B)$\\($S(\pi)-S(\tau)$)} & \thead{$\Delta(\nu)$\\($\nu(\pi)-\nu(\tau)=$\\$\frac{3}{2}\Delta(S)+2\Delta(B)$)} & {\thead{$d_{\PP_k}(\pi,\tau)$\\(number of flips)}} & \thead{Potential gain\\($\Delta(\nu)-d_{\PP_k}(\pi,\tau)$)}   \\
    \midrule
    Case 1 & 2 & $\ge-1$ & $\ge 1$ & 1   & $\ge0$  \\
    Case 2 & 2 & $\ge0$   & $\ge3$ & 3  & $\ge0$  \\
    Case 3 & 0 & 1   & 2 & 1 & 1  \\
    Case 4 & $\ge0$ & $\ge1$  & $\ge2$ & 2  & $\ge0$  \\
    \bottomrule
  \end{tabular}
\end{tcolorbox}
\end{table} 
\end{proof}

\hide{\begin{rmk}\label{suffixusage} We note that the only usage of a suffix reversal occurs in Case 4. Furthermore, we mention a few subcases here where we can achieve potential gain $0$. 

As in the proof above, let $j$ be the first letter of $\pi$. Suppose that $j$ is not free, with $x$ being the last letter in its block, and $j-o_j$ is not free with $y\neq j-o_j$ being the first letter in its block. This is equivalent to being in Case 4. We remark that if at least one of $x-o_x,y-o_y$ are not free, then there exists a sequence of prefix flips with potential gain 0. The first subcase ($x-o_x$ not being free) was noted in \cite{gates}, while we are not aware of reference for the second subcase. For the interested reader, we include a demonstration of both these subcases in Appendix~\ref{nosuffix}.

In light of this, it may be worthwhile investigating the pancake problem by trying to avoid the remaining subcase (where $x-o_x$ and $y-o_y$ are both free) from happening. Perhaps one can devise a potential function that negatively weights when there exists two consecutive blocks surrounded by free singletons.
\end{rmk}}

\section{Conclusion}\label{conc}

Here we briefly discuss why we believe fundamentally new ideas are required to prove $h(k) \le \left(\frac{3}{2}-\ep\right)k +O(1)$ for some $\ep>0$.

First, we recall that our paper, and all previous work establishing upper bounds on the pancake problem (cf. \cite{upper,gates}) have worked by defining a potential function $\Phi:\SS_k\to \R_{\ge 0}$ and proving a version of Lemma~\ref{key} where $\nu$ is replaced by $\Phi$. In particular, these potential functions have always been a linear combination of the number of ``singletons'' and ``blocks'' in a permutation (though our paper uses a slightly different notion of block than past literature).

Furthermore, the strategies dictated by these potential functions have always been ``locally greedy'' in the following sense. Given a vertex $\pi \in \SS_k$, these strategies ``suggest'' we take a short sequence of reversals to reach some $\pi'$ which closer to $\Id_k$ (in a sense, this is analogous to gradient descent). The suggestions from these strategies always follow two rules of thumb: 
\begin{itemize}
    \item if you can use one prefix-reversal to combine two singletons into a block, then this should be done;
    \item never use any reversal which breaks a block.
\end{itemize} Morally, when trying to find a short path from $\pi \in \SS_k$ to $\Id_k$, these rules demand that we never take certain edges (though the first rule of thumb doesn't apply when we are in the middle of executing a sequence of reversals).

Now, these locally greedy heuristics are rather intuitive. Each reversal can only create one new adjacency, and starting from a typical $\pi \in \SS_k$ this must be done at least $k-O(1)$ times to reach $\Id_k$. So it would seem quite foolish to ``pass'' on an oppurtunity to create an adjacency for the cost of one reversal, unless you know something about the global structure at this time (which seems difficult to track due to the fact that most relevant information does not get preserved by reversals). The first rule basically tells us to always ``buy'' an adjacency when it is available for the cheapest possible cost. The second rule tells us to never take an edge which decreases our adjacencies by one (i.e., never ``sell'' an adjacency and simultaneously pay another reversal for it).

\begin{rmk}
 In some instances, these rules of thumb are inoptimal. For example, starting at $(2,1,4,3)$, any path to $\Id_4$ which doesn't break blocks must use $4$ reversals. However, there is a path of length of 3 available to us, namely $(2,1,4,3) \to (4,1,2,3) \to (3,2,1,4) \to (1,2,3,4)$. This demonstrates that the second rule can be inoptimal. 
 
 Similar issues arise if we follow the first rule of thumb. Starting at $(4,1,2,3)$, we would be suggested to go to $(2,1,4,3)$, which has $\PP_4$-distance $3$ from $\Id_4$, causing us to take path of length $4$ in total. Meanwhile, there is a path of length 2 available to us, namely $(4,1,2,3)\to(3,2,1,4) \to (1,2,3,4)$.
\end{rmk}

Anyways, starting at certain permutations $\pi \in \SS_{2d}$, the first rule of thumb would force us to use $d$ steps to reach a permutation $\tau\in \SS_{2d}$ made up of $d$ blocks. This leads us to a related problem involving ``signed permutations''.

A \textit{signed permutation} is an element $(\pi,\hat{x}) \in \SS_d\times \{-1,1\}^d$. We write $\SS_d^*$ to denote the set of signed permutations. We now say that $(\pi,\hat{x}),(\tau,\hat{y})\in \SS_d^*$ are related by a prefix reversal if there exists $t$ so that $\pi(i) = \tau(i),x_i = y_i$ for $i\ge t$ and $\pi(i) = \tau(t-i),x_i = -y_{t-i}$ for $i<t$. Informally, $x_i$ as tracking the ``orientation'' of the $i$-th letter of $\pi$, and this orientation gets ``flipped'' whenever said letter is moved by a reversal. 

Define $\PP_d^*$ to be the graph on vertex set $\SS_d^*$ with vertices being adjacent if they are related by a prefix reversal. The \textit{burnt pancake problem} asks to bound $f^*(d)$, the diameter of $\PP_d^*$. The work of Gates and Papadimitrou gives that $f^*(d) \le 2d+O(1)$ \cite[Theorem~3]{gates}, and this has only been improved by additive constant (see \cite[Corollary~8.3]{cohen}).

The relevence of signed permutations, is that the subgraph of $\PP_{2d}$ induced by the set of permutations having no singletons, $\PP_{2d}[S^{-1}(0)]$, is isomorphic to $\PP_d^*$. Indeed, first observe that for $\pi \in S^{-1}(0) \subset \PP_{2d}$, the information $\pi(2),\pi(4),\dots,\pi(2d)$ is unique (since\footnote{We remind the readers that $o_j := (-1)^{j+1}$.} $\pi(2i-1) = \pi(2i)-o_{\pi(2i)}$ for $t=1,\dots,d$). The isomorphism from $\PP_{2d}[S^{-1}(0)]$ to $\PP_d^*$ is then given by taking $\varphi(\pi) = (\pi^*,\hat{x})$ so that for $i \in [d]$, $\pi^*(i) = \lceil \pi(2i)/2\rceil$ and $\hat{x}(i) = o_{\pi(2i)}$.

Recall the second rule of thumb, that we never use reversals that break blocks. Following this rule, once we have no singletons (i.e., reach a state $\pi \in S^{-1}(0)$), we maintain this. In which case, the problem reduces to finding the shortest $\PP_d^*$-path from $\varphi(\pi)$ to $\varphi(\Id_{2d})$. However, as noted above, the best known upper bound for $f^*(d)$ is $2d-O(1)$. And, after giving it some thought, we don't know how to obtain a better upper bound for $h^*(d)$ (the straightforward generalization where we allow suffix reversals of signed permutations).

Putting this together, the first rule of thumb can force us to spend $d-O(1)$ reversals to reduce to the signed case, after which we spend another $2d-O(1)$ reversals to get to the identity.

\appendix

\section{Standard pancake results}\label{standard}

Here we prove two well-known facts we use.

\mono*
\begin{proof}Consider the surjective map $\varphi:\GG_{k+1}\to \GG_k; \pi\mapsto \pi|_{\pi^{-1}([k])}$, where we ignore the placement of the letter ``$(k+1)$''. We now observe that if $\pi,\tau$ are adjacent in $\GG_{k+1}$, then we either have \begin{itemize}
    \item $\varphi(\pi) = \varphi(\tau)$;
    \item or $\varphi(\pi), \varphi(\tau)$ are adjacent in $\GG_k$
\end{itemize}(formal verification of this claim is left as an exercise to the reader). 

So, for any path $P \subset \GG_{k+1}$ with endpoints $\pi,\tau$, the image of its vertex-set $S:= \varphi(V(P))$ must be connected in $\GG_k$. And thus there is a path $P'\subset \GG_k$ from $\varphi(\pi)$ to $\varphi(\tau)$, where $V(P') \subset S = \varphi(V(P))$ (implying that the length of $P'$ is at most the length of $P$). Consequently, we have $d_{\GG_{k+1}}(\pi,\tau)\le d_{\GG_k}(\varphi(\pi),\varphi(\tau))$ for every $\pi,\tau \in \SS_{k+1}$. 

Due to the surjectivity of $\varphi$, we see that the diameter of $\GG_k$ is at most the diameter of $\GG_{k+1}$. Recalling the definiton of $h$, we get our desired result.
\end{proof}

\lastblock*
\begin{proof}As $\PP_k$ is a subgraph of $\GG_k$, the left inequality is immediate. Meanwhile, the right inequality was proven in \cite[p. 3]{gates}. We shall reproduce their proof for the reader's convenience.

By our assumption on $\sim_\sigma$, we note $\sigma$ must take one of the forms:
\begin{itemize}
    \item $\sigma = t,(t-1),\dots,1, k,(k-1),\dots,(t+1)$ for some $t\in [k]$;
    \item or $\sigma = (t+1),(t+2),\dots,k,1, 2,\dots,t$ for some $t\in [k]$.
\end{itemize}\noindent Let $A$ be the set of $\sigma$ satisfying the first bullet, and $B$ be the set of $\sigma$ satisfying the second bullet.

We first note that given any $\sigma \in A$, we can apply a $k$-letter prefix reversal to reach $\sigma' = (t+1),\dots,k,1,\dots,t$ which belongs to $B$. Hence $\max_{\sigma \in A} \{ \min_{\sigma'\in B} \{d_{\PP_k}(\sigma,\sigma')\}\} =1$.

So it now remains to show that $\max_{\sigma \in B}\{d_{\PP_k}(\sigma,\Id_k)\} \le 3$. Consider $\sigma = (t+1),\dots,k,1,\dots,t \in B$, and observe the path 
\[(t+1),\dots,k,1,\dots,t \to k,\dots,(t+1),1,\dots,t \to t,\dots,1,(t+1),\dots,k \to 1,\dots,t,(t+1),\dots,k = \Id_k.\]
\end{proof}

\hide{\section{Subcases where we can avoid the suffix reversal}\label{nosuffix}

Here we show two subcases of Case 4, where we can avoid the use of a suffix reversal.

Here, for $t \le k$ it is useful to write $P_t$ for the action $\mathcal{S}_k\to \mathcal{S}_k$ induced by doing a prefix reversal of length $t$.

\begin{lem}{\cite[Algorithm~A, Step~7]{gates}}Consider $\pi \in \SS_k$ with $\nu(\pi) > 2$ and take $j = \pi(1)$. Furthermore, suppose $j$ is not free, and take $x$ to be the last letter in $j$'s block.

Suppose $x-o_x$ is not free. Then there exists $\tau \in \SS_k$ with $d_{\PP_k}(\pi,\tau) = 2 \le \nu(\pi)-\nu(\tau)$.
\begin{proof}By assumption, $x-o_x$ is not free, and thus contained in a block; we shall merge the blocks containing $x$ and $x-o_x$.

Let $i = \pi^{-1}(x-o_x),t= \pi^{-1}(x)$. There are two cases.

Case 1 ($(i,i+1)$ is an adjacency): Then $x-o_x$ is first in its block. We first apply $P_t$ to $\pi$, reversing our front block so that $x$ is the first letter. We then apply $P_{i-1}$, which joins the blocks containing $x$ and $x-o_x$.

Case 2 ($(i,i+1)$ is not an adjacency): We first apply $P_i$, and then apply $P_{i-t}$.\end{proof}
\end{lem}

\begin{lem}Consider $\pi \in \mathcal{S}_k$ with $\nu(\pi)>2$ and take $j = \pi(1)$. Suppose that $j$ is not free.

Additionally, suppose $j-o_j$ is free. Then there exists $\tau \in S_k$ with $d_{\PP_k}(\pi,\tau) = 3 \le \nu(\pi)-\nu(\tau)$.
\begin{proof}Let $i = \pi^{-1}(j-o_j)$. We apply $P_{i-1}$ followed by $P_i$, to reach a new permutation $\pi'$. 

Let $i' = \pi'^{-1}(j-2o_j)$. We conclude by applying $P_{i'-1}$, giving us a permutation $\tau$.

Clearly, we have used $3$ prefix reversals, demonstrating $d_{\PP_k}(\pi,\tau) \le 3$. Meanwhile, a bit of thought\footnote{Of course, an upper bound suffices to prove to Lemma, so the following argument is just for sport. Let $w_0,\dots,w_{\ell}$ be a walk in $\PP_k$ from $\pi$ to $\tau$. For $q = 0,\dots,\ell$, let $a_q:= w_q^{-1}(j),b_q := w_q^{-1}(j-o_j),c_q:= w_q^{-1}(j-2o_j)$ and let $N_q$ be the number of ``non-isolated'' $z \in \{a,b,c\}$ where $|\{a_q,b_q,c_q\}\cap \{z_q-1,z_q+1\}|>0$. We have that $I_0 = 0,I_\ell = 3$. We note two things which imply that $\ell\ge 3$ First, it is trivial to confirm that $|N_q-N_{q-1}| \le 2$ for $q \in [\ell]$. Next, if $N_q-N_{q-1} > 0$, then $1\not\in \{a_q,b_q,c_q\}$, implying $N_{q+1} \le N_q$.} reveals that there can be no shorter path from $\pi$ to $\tau$, thus we achieve equality.

One can then 
\end{proof}
\end{lem}}

\end{document}